\documentclass[10pt,openright]{article}
\usepackage{amssymb, amsmath}
\usepackage{color}
\usepackage{latexsym}
\usepackage{amscd}
\usepackage{amsmath,amsfonts,amssymb,amsbsy,amstext,amsthm,amscd}
\usepackage{graphicx}

\newtheorem{theo}{Theorem}

\newtheorem{propo}{Proposition}
\newtheorem{lema}{Lemma}
\newtheorem{conj}{Conjecture}

\begin{document}

\title{On Malcev algebras nilpotent by  Lie center and  corresponding analytic Moufang loops.}

\date {May, 2020}

\author{Alexander Grishkov, Marina Rasskazova,\\ Liudmila Sabinina, Mohamed Salim}
\maketitle

\begin{abstract}

In this note we describe the structure of finite dimensional Malcev algebras over a field of real numbers $\mathbb{R}$, which are nilpotent module its Lie center. It is proved that the corresponding  analytic global  Moufang loops are nilpotent module their nucleus. \\

\textbf{Key words:}  \textit{Malcev algebras, Moufang loops, Global Moufang loops}.\\

\textbf{2010 Mathematics Subject Classification:}  17D10, 20N05.
\end{abstract}
\section{Introduction}
The theory of analytic loops started with the work of A.I. Malcev in [Ma1]. In this article, the correspondence between the analytic local diassociative loops and the binary Lie algebras was established.
A {\it loop}   is a set $M$, endowed with a  binary operation $M\times M\rightarrow M$, with the neutral element $e \in M$ and the
condition that the equations $ax=b, ya=b$ for all $a,b \in M$ have a unique solution. A  loop is called {\it  diassociative},
if every two elements of this loop generate a subgroup.
Malcev showed that
 for an analytic  loop, with  the Moufang identity:
$(xy)(zx) = x(yz)x$  the corresponding tangent algebra satisfies the following identities:
$$x^2= [J(x,y,z), x] - J(x, y, [x,z])=0,$$
where $J(x,y,z) = [[x, y], z]+[[x, z], x]+[[z, x], y]$ (See [1]). Algebras with these defining identities  are currently called \textit{Malcev algebras}.
The more difficult question, if every finite dimensional  Malcev $\mathbb{R}$-algebra is the tangent algebra of some local analytic Moufang loop  was solved positively by E.Kuzmin in 1969 in [Kuzm2].

Let us consider a pair $(\mathfrak a,\mathfrak L)$, where $\mathfrak a$ is some subvariety of binary Lie  algebras (in particular Malcev algebras)
 and $\mathfrak L$ is a  subvariety of  diassociative loops (in particular, Moufang loops).

A pair $(\mathfrak a,\mathfrak L)$,  will be called \textit{locally dual},  if it  satisfies the following conditions:\\
\begin{itemize}
\item
 every tangent algebra  of a local analytic loop from the variety $\mathfrak L$
belongs to the variety $\mathfrak a$,
\item
 every  finite-dimensional $\mathbb{R}$-algebra from the variety $\mathfrak a$ is a tangent algebra of some local
 analytic loop from the variety $\mathfrak  L$
\end{itemize}

The description of all locally dual  pairs is a meaningful and difficult problem.

If a locally dual  pair $(\mathfrak a,\mathfrak L)$  satisfies a  stronger condition, namely, if  for every  global analytic loop $S$ such
that its local loop belongs to $\mathfrak L$, is in $\mathfrak L$ too, we will call such a pair $(\mathfrak a,\mathfrak L)$ {\it globally dual}.

Kerdman \cite{Ker} showed that a pair $ (\mathfrak m, \mathfrak M)$ is globally dual if $\mathfrak m$ is the variety of all Malcev  algebras, and
$\mathfrak M$ is a variety of all Moufang loops. In this note we  study  the duality of a pair formed by two varieties:
the first  one, $NL_k$, is the subvariety
of the variety $\mathfrak m$, which is defined by 
the identity   $J([x_1, x_2,...,x_k], y, z)=0,$
where in the product $[x_1, x_2,...,x_k]$ a distribution of parentheses is arbitrary
and the second  one, $NG_k$, is  the subvariety
of the variety $\mathfrak M$, defined by  the identity
 $([x_1, x_2,...,x_k], y,z)=1.$  Here $[x_1, x_2,...,x_k]$ is a commutator  of length $k$ with an arbitrary distribution of parentheses
and  $(a,b,c) = ((ab)c)(a(bc))^{-1}$ is an associator.\\
 For a Malcev algebra $M$ we define $M^1=M$, $M^n=\underset { i,j>0} {\overset  {i+j=n} {\sum}} [M^{i},M^{j}]$.
Let $F$ be a free Malcev algebra, then $NL_k$ is a variety of Malcev algebras defined by all identities of the type $J(w ,x, y)=0$, where $w\in F^{k}, \, k\geq 2.$

\section{Structure of finite dimensional Malcev \\algebras nilpotent by Lie center.}

Let $M$ be a finite dimensional Malcev algebra  over a field $\mathbb C, \,\,\, $ and let $G$ be a solvable radical of
$M. $ Then there exists  a semisimple subalgebra (Levi factor)  $S,$ such that  $ M = S \oplus G $
( [Gri1],[Kuzm1],[Car]). \\

 We will use the results and the terminology from [Gri2]. Let $ g\in M$, the element $g$ is said to be
 {\it splitting element} if $g=t+n$, where $n$ is a nilpotent element and $t$ is a {\it semisimple element}, i.e., the right multiplication operator
$R_t$ is diagonalizable and the operators $R_t$ and $R_n$ commute. A Malcev algebra  $M$ is said to be {\it splitting} if all elements  of $M$ are splitting.
If $M$ is  a finite-dimensional splitting Malcev algebra over a field of characteristic 0, then  $M=S\oplus T \oplus N$,  where $S$ is a  semisimple  Levi factor,
$T$ is  an  abelian  subalgebra  of $M$ such  that  each  element  of  $ T$ is  semisimple ({\it toroidal} subalgebra),
and $N$ is  the  nilpotent  radical  of $M.$ Additionally
$[S,T]=0$,
$$N=\sum_{\alpha\in\Delta}\bigoplus N_\alpha ,$$
where $\Delta \subset T^{*}=\mathrm{Hom}_{k}(T,k)$ and
\begin{align}
 N_{\alpha}=\left\{x\in N\mid [x,t]=\alpha(t)x, \quad \forall t\in T\right\}.
\end{align}
Moreover,
 $[N_{\alpha}, N_{\beta} ] \subseteq  N_{\alpha +\beta },$ if $\alpha \neq  \beta, $ and $ \,[N_{\alpha}, N_{\alpha}]\subseteq N_{2\alpha} + N_{-\alpha}.$\\
Since $[T,S]=0$, one has that  $N_{\alpha}$ is an $S$-module and hence $N_{0}=N_{01}\oplus N_{00}$,  $[S,N_{0}]= [S,N_{01}]=N_{01}$
and $[S,N_{00}]=0$. Set
$$M_{11}=\left(S\oplus\sum_{\alpha\in\Delta\backslash 0}\bigoplus N_{\alpha}\right)\oplus N_{01}$$
and let  us denote  by $M_1$ the subalgebra generated by $T\oplus M_{11}$.  Notice that in general, $M_1\neq T\oplus M_{11}$, and
$[N_{01}, N_{00}] \subseteq N_{01},$ $[M_{11}, N_{0}]\subseteq M_{11}$. Hence $M_{1}$ is an ideal.
Every finite-dimensional Malcev algebra $M$ over a field of characteristic $0$ is contained in some splitting Malcev algebra
$\hat M.$ If such  $\hat M$ does not contain intermediate splitting subalgebra, which contains $M$,  then $\hat M$ is called a {\it splitting of }$M$.
Each automorphism of the algebra $M$ extends uniquely to an automorphism of a splitting of $M.$ If $\hat M$  is a splitting of $M$, then ${\hat M}^2 = M^2 $,
and any ideal of the algebra
$M$ is an ideal of the algebra  $\hat M$ and vice verse any ideal of  $\hat M$, which is in $M$ is also the ideal of $M$.
This result is analogous to one for Lie algebras due to A. I. Malcev [Ma2].\\
In what follows in this article the splitting algebra of an algebra $M$ we will denote by $\hat M.$\\[2ex]

 Recall that
$$Lie(M)=\{x\in M |  J(x,M,M)=0\}$$ is the {\it Lie center} of $M$. \\

\begin{lema} In this notation $Lie(M) \subseteq Lie(\hat M).$
\end{lema}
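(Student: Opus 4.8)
The plan is to recast membership in the Lie center as a derivation property and then to verify it on the few ``new'' directions of $\hat M$. For any anticommutative algebra one has the identity $J(x,a,b) = (ab)x - (ax)b - a(bx)$, where juxtaposition denotes the product; hence $J(x,a,b)=0$ for all $a,b$ is exactly the Leibniz rule for the right multiplication $R_x$. Thus $x\in Lie(M)$ iff $R_x$ is a derivation of $M$, and $x\in Lie(\hat M)$ iff $R_x$ (right multiplication in $\hat M$) is a derivation of $\hat M$. The statement to prove is therefore: \emph{if $R_x$ is a derivation of $M$, then it is a derivation of $\hat M$.} Observe first that $M$ is an ideal of $\hat M$, since $[\hat M,M]\subseteq \hat M^2 = M^2\subseteq M$.

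Next I would localize the problem to the toroidal part. Using $\hat M^2=M^2$ one sees that $S=[S,S]$, each $N_\alpha$ with $\alpha\neq 0$ (because $N_\alpha=\alpha(t)^{-1}[N_\alpha,t]$ for a suitable $t\in T$), and $N_{01}=[S,N_0]$ all lie in $\hat M^2=M^2\subseteq M$; consequently $\hat M = M + (T\oplus N_{00})$, so every direction of $\hat M$ outside $M$ is toroidal or central nilpotent. Since $J$ is trilinear and alternating and $J(x,\cdot,\cdot)$ already vanishes on $M\times M$, it suffices to check $J(x,u,v)=0$ when $u\in T\oplus N_{00}$. The case $u,v\in T$ is immediate: because $[T,T]=0$ and $R_t$ acts on $N_\alpha$ by the scalar $\alpha(t)$, writing $x$ in $T$-weight components makes the three summands of $J(x,t,t')$ cancel; the cases involving $N_{00}$ are handled the same way. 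This leaves the essential case $J(x,t,m)$ with $t\in T$ and $m\in M$.

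The heart of the matter is this last case, and it is where I expect the real work. Expanding $J(x,t,m)$ for $m\in N_\gamma$, writing $x=x_S+x_T+\sum_\alpha x_\alpha$ and using $[w,t]=\alpha(t)w$ on $N_\alpha$, a direct computation collapses every contribution except a single anomalous one, giving
$$J(x,t,m) = -3\,\gamma(t)\,[\,m,x_\gamma\,]_{N_{-\gamma}},$$
the subscript denoting the $N_{-\gamma}$-component. Equivalently (granting that $R_x$ extends to a derivation $D$ of $\hat M$, by the derivation analogue of the automorphism-extension property), $J(x,t,m)=[\,[t,x]-D(t),\,m\,]$ for $m\in M$, so the required vanishing says exactly that the element $[t,x]-D(t)$ centralizes $M$. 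The main obstacle is to prove that $[m,x_\gamma]_{N_{-\gamma}}=0$: this is the genuinely non-Lie part of the Malcev product --- it is precisely the term $[N_\gamma,N_\gamma]\subseteq N_{2\gamma}+N_{-\gamma}$ that prevents $R_t$ itself from being a derivation of $\hat M$ --- so it cannot be removed by weight bookkeeping alone and must be extracted from the hypothesis. I would derive it from $J(x,M,M)=0$ together with the Malcev identity $J(x,y,[x,z])=[J(x,y,z),x]$, using that for $\gamma\neq 0$ the component $x_\gamma$ lies in $N_\gamma\subseteq M$ and that $Lie(M)$ is stable under the semisimple operators $R_t$ with $t\in T\cap M$. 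Pinning down this vanishing completes the proof that $R_x$ is a derivation of $\hat M$, i.e. $x\in Lie(\hat M)$.
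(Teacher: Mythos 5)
Your reduction is sound and genuinely parallels the paper's: the reformulation of $Lie(M)$ via derivations, the observation that $S$, the $N_\alpha$ with $\alpha\neq 0$ and $N_{01}$ already lie in $\hat M^2=M^2\subseteq M$, and above all the weight computation showing that the only surviving term of $J(x,t,m)$ is $-3\gamma(t)[m,x_\gamma]_{-\gamma}$ --- this is exactly the obstruction the paper isolates in its equations (3)--(4). But you then stop at the decisive point. You write that ``the main obstacle is to prove that $[m,x_\gamma]_{N_{-\gamma}}=0$'' and that you ``would derive it from $J(x,M,M)=0$ together with the Malcev identity,'' without actually doing so. That vanishing is the entire content of the lemma, and nothing in your sketch supplies it: $J(x,\cdot,\cdot)=0$ on $M\times M$ gives you no direct handle on $J(x,t,m)$ when $t\in T\setminus M$, which is precisely the case at issue. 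The paper closes this gap by exploiting the specific structure of the splitting construction: each new toroidal generator $t_i$ arises as the semisimple part of an element $x_i=t_i+n_i$ that already lies in the previous stage $M(i-1)$, with $n_i$ nilpotent and $[n_i,t_i]=0$. Since $x_i\in M(i-1)$, the hypothesis gives $J(l,x,x_i)=0$; expanding this yields $3\alpha[x,l]_{-\alpha}+[[x,l]_{-\alpha},n_i]+[[x,n_i],l]_{-\alpha}-[[l,n_i],x]_{-\alpha}=0$, the last two terms are killed by an induction on the $R_{n_i}$-nilpotency degrees $|x|+|l|$, and applying $R_{n_i}^{s-1}$ to what remains forces $[x,l]_{-\alpha}=0$. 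Your proposal contains no substitute for this induction, so the proof is incomplete exactly where the real work lies.

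A secondary problem: you dismiss the cases where one argument lies in $N_{00}$ as ``handled the same way'' as the toroidal case. They are not --- elements of $N_{00}$ act nilpotently, not by scalars on the $N_\alpha$, so the cancellation of the three summands of $J$ by weight bookkeeping does not apply to them. (These directions can in fact be reduced modulo $M$ to toroidal ones, since each new direction $t_i\notin M$ satisfies $t_i+n_i\in M$, but that is an argument you would need to make, and it is not the one you give.) Note also that the paper works one extension step $M(i-1)\subset M(i)$ at a time, which is what makes the hypothesis $J(l,M(i-1),M(i-1))=0$ available at full strength at each stage; your all-at-once treatment of $\hat M=S\oplus T\oplus N$ would need the same care.
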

\begin{proof}
By the construction $\hat M = \underset {i=1}{\overset {n}{\bigcup }}M(i),$ where $M(1) = M, \,M(n) = \hat M,\,$ dim$M(i) =$dim$M(i-1) +1,\,\,
M(i) = M(i-1) + {\mathbb R} t_i, \, i \geq 2,$  where $t_ i$ is a semisimple element, i.e. $R_{t_i } : M(i-1) \rightarrow M(i-1)$ is a diagonalizable operator. Moreover
there exists $ x_i \in M(i-1)$, such that $n_i = x_i  -  t_i$ is a nil element, i.e $R_{n_i}$ is a nilpotent operator and
$R_{t_i} \circ R_{n_i}  = R_{n_i} \circ R_{t_i},\,$
$ [n_i,\,t_i ]= 0.$

Now consider $l\in Lie(M(i-1)), \, x\in M(i-1).$ It is sufficient to show that $J(l, x, t_i)= 0.$ Suppose that
\begin{align}
M(i-1) = \underset {\alpha \in \Delta} {\Sigma}{\oplus {M(i-1)_{\alpha}}}
\end{align}
is a Cartan decomposition with respect to the operator $R_{t_i}$  or  $R_{x_i}.$ \\

Without loss of generality one can assert that
$$l\in {Lie(M(i-1)\cap M(i-1)_{\alpha}}, \, x\in M(i-1)_{\beta}.$$
If $\alpha \neq \beta,$  then $J(l, x, t_i) = 0$ due to  the fact that any Malcev algebra is a binary-Lie algebra.\\
Consider the case $\alpha = \beta \neq 0.$\\
The equality $J(l, x, t_i) = 0$ is equivalent to $[x, l] \in M(i-1)_{2\alpha}.$\\

Indeed, suppose that $[x, l] \notin M(i-1)_{2\alpha}.$
Then $[x, l] = [x, l ]_{2\alpha} + [x, l]_{-\alpha},$
where $[x, l ]_{2\alpha} \in M(i-1)_{2\alpha}$  and $0\neq [x, l]_{-\alpha} \in M(i-1)_{-\alpha}.$
Recall that $x_i -n_i =t_i.$\\
$$0= J(l,x,x_i)
= [[x, l]_{2\alpha}, (t_i +n_i)] + [[x, l]_{-\alpha}, (t_i +n_i)]+ [[x, (t_i +n_i)], l] -  [[l, (t_i +n_i)], x] $$
We get
$${2\alpha}[l,x] _{2\alpha}- {\alpha}[l,x]_{-\alpha} +[[x, l]_{2\alpha}  + [x, l]_{-\alpha}, n_i] + [[x, t_i], l]+[[x, n_i], l] - [[l, t_i ], x]  -[[l, n_i], x] $$
$$= {2\alpha}[l,x] _{2\alpha}- {\alpha}[l,x]_{-\alpha} +   [[(x, l ]_{2\alpha} + [x, l]_{-\alpha}), n_i]  +[[x, n_i], l] +2 {\alpha}([x, l ]_{2\alpha} + [x, l]_{-\alpha})$$
$$ - [[l, n_i], x]
=- {2\alpha}[x, l]_{2\alpha} + {\alpha}[x, l]_{-\alpha} + 2 {\alpha}([x, l ]_{2\alpha} + [x, l]_{-\alpha}) +[([x, l ]_{2\alpha} + [x, l]_{-\alpha}), n_i] $$
$$+[[x, n_i], l]_{2\alpha} + [[x, n_i], l]_{-\alpha} - [[l, n_i], x]_{2\alpha} - [[l, n_i], x]_{-\alpha}=0  $$
Finally we get

\begin{align}
3{\alpha}[x, l]_{-\alpha} +[[x, l]_{-\alpha}, n_i] +[[x, n_i], l]_{-\alpha}  - [[l, n_i], x]_{-\alpha}=0
\end{align}

For every element $0\neq a\in {M(i-1)_{\alpha}}$ one can define the number\\  $\left|{a}\right| =min\{m \mid R^m_{n_i}a =0\}.$
Then the equality $[x, l]_{-\alpha}= 0$ one can show by induction on  the sum  $\left|{x}\right| + \left|{l}\right|.$
If $\left|{x}\right| + \left|{l}\right|= 2$ one has
$ [[x, n_i], l]_{-\alpha} =[[l, n_i], x]_{-\alpha}=0.$
Now suppose that  $2<  \left|{x}\right| + \left|{l}\right| = m.$ \\
Then one has $\left|{[x, n_i]}\right| + \left|{l}\right| < m$ and $\left|{[l, n_i]}\right| + \left|{x}\right| < m$\\
and by induction hypothesis
$[[x, n_i], l]_{-\alpha} = [[l, n_i], x]_{-\alpha}=0.$\\
Therefore by (3) we get
\begin{align}
3{\alpha}[x, l]_{-\alpha} +[[x, l]_{-\alpha}, n_i] =0
\end{align}
Now if $\mid [x, l]_{-\alpha} \mid =s$ we can apply  $R^{s-1}_{n_i}$ to the equality (4) and
get $[x, l]_{-\alpha} =0.$\\
 In the case $\alpha = \beta =0$ we have $J(l, x, t_i) = 0,$ since  $[N_0, N_0]\subseteq N_0$ and  $[N_0, t_i] =0$

Thus one has
$$Lie(M) \subseteq ...\subseteq Lie(M (i))\subseteq ... \subseteq Lie(\hat M)$$

\end{proof}

In the theory of Lie algebras there exists  the following construction of {\it decomposable extension}.
Let $L$ be a Lie algebra and let $N$ be a subalgebra of the Lie algebra $\mathrm{Der}L$ (the algebra of all derivations of $L$). Then the direct sum $N\oplus L$ has a structure of Lie algebra with the multiplication:
\begin{align}
(a,l)\cdot (b, r) = ([a,b],\ l^b-r^{a} + [l,r]). \label{IV}
\end{align}

Notice that we are not assuming that $N$ or $L$ is abelian. \\

This construction has a generalization for Malcev algebras.\\

Suppose that $M$ is a Malcev algebra such that $M= \tilde N +L,$ where $L\subseteq Lie(M),$ and $M$ has an ideal $I \in \tilde N$ such that $J(\tilde N)\subseteq I,\,  [I,L]=0,$
then $\tilde N/I  \cong  N$ is a Lie algebra. It means that
 $N$ acts on $L$ by derivations. In this case the formula (\ref{IV}) defines a  Malcev algebra structure on $\tilde M=\tilde N \oplus L$, where $I$ acts trivially on  the
Lie algebra $L$ by definition. This construction is called the {\it decomposable extension of Malcev algebras}. Notice that in this construction
$L$ is an ideal contained in the Lie center of $M$.\\
In what follows the decomposable extension of Malcev algebra  will be denoted by $\tilde M.$\\

The aim of this section is to show the following
\begin {theo}
Let $M$ be a finite dimensional Malcev algebra  from the variety $NL_k$ over a  field of complex numbers $\mathbb C$. Then
\begin {enumerate}
\item $M$ may be embedded into the splitting  Malcev algebra $\hat M = S \oplus T  \oplus N \in NL_k, $ where $S$ is a semi-simple Lie subalgebra,
$T$ is a toroidal subalgebra, $N$ is a nilpotent ideal.
\item $N = N_{00} \oplus [S,N],$ where $ N_{00}$ is a Malcev subalgebra of $N$ and the ideal $M_1$ generated by
$S \oplus T  \oplus  \left(\sum_{\alpha\in\Delta\backslash 0}\bigoplus N_{\alpha}\right)\oplus  [S, N]$ is contained in $Lie (\hat M)$.
\item
There exists a Malcev algebra $\tilde M=N_{00}\oplus M_{1}$  of the variety $NL_k$,  which is a decomposable extension of a nilpotent
Malcev algebra $N_{00}$ and a Lie subalgebra $M_1,$ such that there exists an epimorphism $\pi:\tilde M \longrightarrow \hat M$.
\end{enumerate}
\end {theo}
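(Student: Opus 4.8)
The plan is to read the membership condition $NL_k$ through the Lie center: since the Jacobian $J$ is alternating in its three arguments, for $w\in A^k$ one has $J(w,x,y)=0$ for all $x,y$ if and only if $w\in Lie(A)$, so an algebra $A$ lies in $NL_k$ exactly when $A^k\subseteq Lie(A)$. I will use this reformulation throughout.

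For (1), the embedding $M\hookrightarrow\hat M$ and the decomposition $\hat M=S\oplus T\oplus N$ are the splitting theory quoted from [Gri2]; the only new point is $\hat M\in NL_k$, i.e. $\hat M^k\subseteq Lie(\hat M)$. By Lemma 1 and the hypothesis $M\in NL_k$ we already have $M^k\subseteq Lie(M)\subseteq Lie(\hat M)$, and since $\hat M^2=M^2$ every element of $\hat M^k$ lies in $M^2$. The discrepancy between $\hat M^k$ and $M^k$ is produced entirely by the torus $T$ and the Levi factor $S$: for a nonzero root $\alpha$ and $t\in T$ with $\alpha(t)=1$ one has $v=[v,t]$ for $v\in N_\alpha$, so $N_\alpha\subseteq\bigcap_j\hat M^j$, and likewise $S=[S,S]$ and $[S,N]$ lie in every $\hat M^j$ by perfectness. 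I will show that each of these stable generators lies in $Lie(\hat M)$ by the same inductive Jacobian computation used in Lemma 1 (tracking the $R_{n_i}$-filtration on the root spaces), and that the remaining, strictly descending part of $\hat M^k$ coming from the nilpotent piece is controlled by $M^k$. Since $Lie(\hat M)$ is a $T$-invariant ideal containing $M^k$, this yields $\hat M^k\subseteq Lie(\hat M)$. As a by-product $S\subseteq Lie(\hat M)$, which forces the Levi factor to be an ordinary semisimple Lie algebra, as asserted.

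For (2), complete reducibility of $N$ as a module over the semisimple $S$ gives $N=C_N(S)\oplus[S,N]$, and $N_{00}:=C_N(S)$ is a subalgebra because $\operatorname{ad}S$ acts by derivations. The key remaining fact is $T\subseteq Lie(\hat M)$: for $x,y\in N_\alpha$ a direct computation gives $J(x,y,t)=-3\alpha(t)[x,y]_{-\alpha}$, and since by (1) $N_\alpha\subseteq Lie(\hat M)$ the left side vanishes, so $[x,y]_{-\alpha}=0$ and hence $J(T,\hat M,\hat M)=0$. Thus all of the generators $S$, $T$, the $N_\alpha$ with $\alpha\neq0$, and $[S,N]$ of $M_1$ lie in the ideal $Lie(\hat M)$, and therefore the whole ideal $M_1$ they generate is contained in $Lie(\hat M)$.

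For (3), I place the construction with $L=M_1$, the Lie-central ideal found in (2), and with the nilpotent subalgebra $N_{00}$ in the role of $\tilde N$, so that $\hat M=N_{00}+M_1$. It remains to produce an ideal $I\subseteq N_{00}$ with $J(N_{00})\subseteq I$, with $[I,M_1]=0$, and with $N_{00}/I$ a Lie algebra; the natural candidate is the ideal generated by the Jacobians of $N_{00}$, and the point to verify is that this non-Lie part acts trivially on $M_1$, which is where $M_1\subseteq Lie(\hat M)$ is used. Granting this, $N_{00}/I$ acts on $M_1$ by derivations and formula (\ref{IV}) endows $\tilde M=N_{00}\oplus M_1$ with a Malcev structure, together with the natural epimorphism $\pi:\tilde M\longrightarrow\hat M$ that restores the genuine product. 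Finally $\tilde M\in NL_k$ follows from the reformulation, since $\tilde M^k\subseteq(N_{00})^k\oplus M_1$ with $M_1$ Lie-central and the nilpotent contribution absorbed by the $NL_k$ identity. The main obstacle is step (1): proving that the torus- and Levi-generated elements appearing in $\hat M^k$ but not in $M^k$ are already Lie-central, which is exactly where the inductive Jacobian estimate of Lemma 1 must be extended.
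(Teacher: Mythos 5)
Your reformulation of $NL_k$ as $A^k\subseteq Lie(A)$ and your outlines of parts (2) and (3) are essentially the paper's argument: $T$ is shown to be Lie-central by the same kind of weight-space computation (the paper's Lemma~2 checks $J(x,y,z)=0$ for $x,y,z\in T\cup\bigl(\bigcup_\alpha N_\alpha\bigr)$, all other cases being absorbed by $M_{11}\subseteq Lie(\hat M)$), and part (3) rests, exactly as you say, on Filippov's identity $[J(\hat M),Lie(\hat M)]=0$, so that the associator part of $N_{00}$ acts trivially on the Lie-central ideal $M_1$; then formula (5) gives $\tilde M=N_{00}\oplus M_1$, the evaluation map $(n,m)\mapsto n+m$ gives the epimorphism, and $\tilde M\in NL_k$ reduces to $N_{00}\in NL_k$, which holds because $N_{00}$ is a subalgebra of $\hat M\in NL_k$. (A small discrepancy: you take $N_{00}=C_N(S)$, while the paper takes $N_{00}=Ann_N(S\oplus T)$; either choice covers $\hat M=N_{00}+M_1$, since that sum need not be direct inside $\hat M$.)

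The genuine gap is part (1), which you yourself flag as ``the main obstacle'' and leave unproved. The route you propose --- showing separately that $S$, the $N_\alpha$ and $[S,N]$ are Lie-central in $\hat M$ by ``extending the inductive Jacobian computation of Lemma~1'' --- cannot work as stated: that computation only transfers Lie-centrality from $M$ to $\hat M$ (it proves $Lie(M)\subseteq Lie(\hat M)$), and gives no mechanism for establishing that an element of $N_\alpha$ is Lie-central in the first place. The only input that forces such elements to be Lie-central is the hypothesis $M\in NL_k$, and the only way to reach them from that hypothesis is the chain $N_\alpha\subseteq\hat M^{\omega}\subseteq\hat M^k=M^k\subseteq Lie(M)\subseteq Lie(\hat M)$ --- i.e. precisely the identification of the lower central series that you stop short of using. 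The paper's proof of (1) is a one-liner from the structural facts quoted from [Gri2]: $\hat M^2=M^2$ together with the correspondence of ideals yields $\hat M^k=M^k$ for all $k\geq 2$, hence $\hat M^k=M^k\subseteq Lie(M)\subseteq Lie(\hat M)$ by Lemma~1, so $\hat M\in NL_k$; the ``discrepancy between $\hat M^k$ and $M^k$'' you try to control does not exist. Once this is in place, $S$, the $N_\alpha$ with $\alpha\neq 0$, and $[S,N]$ all lie in $\hat M^{\omega}=M^{\omega}\subseteq Lie(\hat M)$ for free, and only the Lie-centrality of $T$ requires the root-space computation you sketch in (2).
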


In order to prove this Theorem we need to collect some intermediate results which we will present in the following lemmata.\\[2ex]

Put  $\underset {n=1}{\overset {\infty} {\bigcap}} M^{i}$ by  $M^{\omega}.$
Following introduced notation one has:

\begin{lema}
Let $M$ be a splitting Malcev algebra from the variety $ NL_k$, $k\geq 2$. Then
ideal $M_1$, constructed above, contains $M^{\omega}$ and is contained in the Lie center $Lie(M).$
\end{lema}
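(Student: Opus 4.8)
The plan is to prove the two assertions of the lemma separately: first the inclusion $M^{\omega}\subseteq M_1$, which is essentially a nilpotency argument in the quotient $M/M_1$, and then $M_1\subseteq Lie(M)$, which is where the hypothesis $M\in NL_k$ is actually used. Throughout I will use the vector-space decomposition $M=T\oplus M_{11}\oplus N_{00}$ coming from the splitting structure, together with the facts that $M_1$ is an ideal containing $T\oplus M_{11}$ and that each $M^{n}$ (hence $M^{\omega}$) is an ideal.

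For $M^{\omega}\subseteq M_1$ I would pass to the projection $\pi\colon M\to M/M_1$. Since $M_1\supseteq T\oplus M_{11}$ and $M=T\oplus M_{11}\oplus N_{00}$, the quotient $M/M_1$ is a homomorphic image of $N_{00}$; as $N_{00}$ is a subalgebra of the nilpotent radical $N$, it is nilpotent, and therefore so is $M/M_1$. In particular $(M/M_1)^{\omega}=0$. Because $\pi$ is surjective one has $\pi(M^{n})=(M/M_1)^{n}$ for every $n$, so $\pi(M^{\omega})\subseteq\bigcap_n\pi(M^{n})=(M/M_1)^{\omega}=0$, giving $M^{\omega}\subseteq\ker\pi=M_1$. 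A minor point to check here is that $N_{00}\cap M_1$ is an ideal of $N_{00}$, so that $M/M_1\cong N_{00}/(N_{00}\cap M_1)$ really is the asserted nilpotent quotient.

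For $M_1\subseteq Lie(M)$ the starting point is the defining identity of $NL_k$: specialising $J(w,x,y)=0$, $w\in F^{k}$, to $M$ yields $M^{k}\subseteq Lie(M)$, and since $M^{\omega}=\bigcap_n M^{n}\subseteq M^{k}$ we get $M^{\omega}\subseteq Lie(M)$. I would then note $M_{11}\subseteq M^{\omega}$: indeed $S=[S,S]$, $N_{01}=[S,N_{01}]$ and $N_{\alpha}=[T,N_{\alpha}]$ for $\alpha\neq0$, so each generating space reproduces itself under bracketing and hence lies in every $M^{n}$. Consequently $T\oplus M_{11}\subseteq T+M^{\omega}$, and since $M^{\omega}$ is an ideal the subspace $T+M^{\omega}$ is a subalgebra; containing the generators of $M_1$, it contains $M_1$, i.e.\ $M_1\subseteq T+M^{\omega}$. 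As $Lie(M)$ is a subspace, it now suffices to prove $T\subseteq Lie(M)$. For this I would exploit that $J$ is totally antisymmetric and that $M_{11}\subseteq Lie(M)$: fixing $t\in T$ and writing $x=x_{11}+x_0$, $y=y_{11}+y_0$ along $M=M_{11}\oplus(T\oplus N_{00})$, every term of $J(t,x,y)$ carrying an $M_{11}$-argument vanishes (move it to the first slot), so $J(t,x,y)=J(t,x_0,y_0)$ with $x_0,y_0\in T\oplus N_{00}$. On $T\oplus N_{00}$ the torus acts trivially, so $[t,x_0]=[t,y_0]=0$ and $[x_0,y_0]\in N_{00}$ is again annihilated by $R_t$; hence $J(t,x_0,y_0)=0$. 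Thus $T\subseteq Lie(M)$ and $M_1\subseteq T+M^{\omega}\subseteq Lie(M)$.

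The main obstacle is precisely the step $T\subseteq Lie(M)$. A direct weight computation gives, for $x,y\in N_{\alpha}$, that $J(t,x,y)=-3\,\alpha(t)\,[x,y]_{-\alpha}$, which is in general nonzero because in a splitting Malcev algebra $[N_{\alpha},N_{\alpha}]\subseteq N_{2\alpha}+N_{-\alpha}$; the offending $N_{-\alpha}$-component is exactly the failure of the weight decomposition to be a grading. What rescues the argument is the hypothesis $M\in NL_k$, which forces $M_{11}\subseteq Lie(M)$ and thereby (via the antisymmetry of $J$) both kills these anomalous components and lets one avoid proving the grading property separately. I therefore expect the delicate part of the write-up to be organising the reduction so that the $NL_k$-input enters cleanly, rather than any single computation.
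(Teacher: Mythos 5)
Your proof is correct and follows essentially the same route as the paper's: the $NL_k$ identity gives $M^{\omega}\subseteq Lie(M)$, the self-reproducing spaces $S=[S,S]$, $N_{01}=[S,N_{01}]$, $N_{\alpha}=[T,N_{\alpha}]$ give $M_{11}\subseteq M^{\omega}$, and the remaining verification $T\subseteq Lie(M)$ reduces (via the alternating property of $J$ and $M_{11}\subseteq Lie(M)$) to the zero-weight case where $T$ acts trivially. Your quotient argument for $M^{\omega}\subseteq M_1$ is a welcome addition, since the paper leaves that half of the statement implicit.
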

\begin{proof}
 By definition of the variety $NL_k$ we get that $M^{\omega}\in Lie(M)$ the Lie center of $M$.  By construction $M_{11}\subseteq M^{\omega}\subseteq Lie(M)$,
hence $M' \subseteq Lie(M)$ where $M'$ is a subalgebra of $M$ generated by $M_{11}$. It is clear that
$M_1=T\oplus M',\, [T, M']\subseteq M'=S\oplus V$ with $V\subseteq N$.
Hence for proving the lemma it is enough to prove that $J(x, y, z)=0$, where $x,\, y$ and $z$ are elements of $T\cup\left(\cup_{\alpha\in\Delta}N_{\alpha}\right)$.
If $x, y\in T$, then $z\in N_{\alpha}$ and
$$J(x, y ,z) = [[z,x],y]+[[(y,z)],x] = \alpha(x)\alpha(y)z-\alpha(y)\alpha(x)z=0.$$

If $x\in T$, $y\in N_{\alpha}, z\in N_{\beta}$ and $\alpha\neq 0$ or $\beta\neq 0$, then $y\in M_{11}\subseteq Lie(M)$ or $z\in Lie(M)$, hence $J(x, y, z)=0$.
At last, in the case $\alpha=\beta=0$ we get $J(x, y, z)=0$ since $[N_0,N_0]\subseteq N_0$.
 \end{proof}

\begin{lema}\label{l2}
Let $M$ be a Malcev algebra from the variety $NL_k$.

Then $[J(M), M^{\omega}]=0.$
\end{lema}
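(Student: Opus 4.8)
The plan is to establish the identity purely formally from the Malcev identity, so that it holds for an arbitrary algebra of $NL_k$ without appeal to the structure theory or to finite-dimensionality. First I would record the preliminary facts I will use. Since $M^{n+1}\subseteq M^n$ and each $M^n$ satisfies $[M,M^n]\subseteq M^{n+1}\subseteq M^n$, the $M^n$ form a descending chain of ideals, so $M^{\omega}=\bigcap_n M^n$ is an ideal and $M^{\omega}\subseteq M^k$. Specializing the defining identities $J(w,x,y)=0$ ($w\in F^k$) of $NL_k$ gives $M^k\subseteq Lie(M)$, whence $M^{\omega}\subseteq Lie(M)$; and because $M^{\omega}$ is an ideal, $[w,z]\in M^{\omega}\subseteq Lie(M)$ for every $w\in M^{\omega}$ and $z\in M$. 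I would also note that the Jacobian $J(x,y,z)=[[x,y],z]+[[y,z],x]+[[z,x],y]$ is totally alternating, so membership $v\in Lie(M)$ yields $J(a,b,v)=0$ and $J(a,v,b)=0$ as well as $J(v,a,b)=0$ for all $a,b$.

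The core of the argument is a linearization of the Malcev identity $[J(x,y,z),x]=J(x,y,[x,z])$. Replacing $x$ by $x+w$, expanding by the linearity of $J$ and of the bracket in each slot, and collecting the terms that are bilinear in $x$ and $w$, I would obtain
\begin{align}
J(x,y,[w,z])+J(w,y,[x,z]) = [J(x,y,z),w]+[J(w,y,z),x].
\end{align}
I would then take $w\in M^{\omega}$. Since $w\in Lie(M)$ by the preliminary step, both $J(w,y,[x,z])$ and $J(w,y,z)$ vanish, so the displayed identity collapses to $[J(x,y,z),w]=J(x,y,[w,z])$. Finally $[w,z]\in Lie(M)$, so by the total antisymmetry of $J$ one has $J(x,y,[w,z])=J([w,z],x,y)=0$. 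This gives $[J(x,y,z),w]=0$ for all $x,y,z\in M$ and all $w\in M^{\omega}$, and extending by bilinearity over the span $J(M)$ of the Jacobians yields $[J(M),M^{\omega}]=0$.

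Once the linearization is in place every step is routine, so the only point demanding genuine care is the bookkeeping of that linearization: there are four cross terms to isolate, and one must discard the pure $x$-part $[J(x,y,z),x]-J(x,y,[x,z])$ and the pure $w$-part correctly, leaving exactly the four terms displayed above with the right signs. A secondary matter worth stating explicitly, rather than assuming, is the pair of closure facts in the first paragraph, namely that the $M^n$ are a chain of ideals (so that $M^{\omega}$ is an ideal and $[w,z]$ stays in $M^{\omega}$) and that the $NL_k$ identities specialize to $M^k\subseteq Lie(M)$. I do not expect that the splitting structure or any finiteness hypothesis will be required in this lemma.
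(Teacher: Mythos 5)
Your proof is correct, but it takes a genuinely different and more self-contained route than the paper. The paper disposes of the lemma in one line: it cites Filippov's theorem that $[J(M),Lie(M)]=0$ in any Malcev algebra and combines it with the inclusion $M^{\omega}\subseteq Lie(M)$, which follows from $M\in NL_k$. You instead reprove the needed special case of Filippov's result from first principles: the linearization of $[J(x,y,z),x]=J(x,y,[x,z])$ in $x$ is indeed $[J(x,y,z),w]+[J(w,y,z),x]=J(x,y,[w,z])+J(w,y,[x,z])$, and for $w$ ranging over an ideal contained in $Lie(M)$ every term except $[J(x,y,z),w]$ vanishes; the alternating character of $J$ and the fact that $M^{\omega}$ is an ideal are used exactly where you invoke them, and the signs check out. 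What your argument buys is independence from the reference and from any structure theory: you only need $M^{\omega}$ to be an ideal sitting inside $Lie(M)$, which is immediate, whereas Filippov's full statement $[J(M),Lie(M)]=0$ also requires knowing that $Lie(M)$ is itself an ideal. What it costs is the routine but real verification of the linearization, which you rightly flag as the one place where bookkeeping matters. One small point worth stating explicitly: your conclusion, as extended by bilinearity, covers the linear span of the Jacobians $J(x,y,z)$; since that span is an ideal in any Malcev algebra and is what the paper denotes by $J(M)$, this is exactly what the lemma asserts.
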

\begin{proof}
By the result of Filippov (see [Fi],page 236) one has  $[J(M), Lie(M)] = 0$. On the other hand,  $M^{\omega} \subseteq   Lie(M)$ since  $M\in NL_k.$
\end{proof}

Since $M_1\subseteq M^{\omega}$  by Lemma \ref{l2} we have
$[J (M), M_1] = 0.$ Then the subalgebra  $N_{00}$ acts on the ideal $M_1$  by derivations, hence it is possible to construct,
as above, a Malcev algebra $\tilde M= N_{00} \oplus M_1$ with a product given by (\ref{IV}).\\
It is easy to see that the morphism $\varphi: \tilde M\longrightarrow M,\quad \varphi(n, m)=n+m$ is an epimorphism of Malcev algebras.\\

\begin{lema} The Malcev algebra $\tilde M= N_{00} \oplus M_1$  with the product given by (\ref{IV}) is a Malcev algebra of the variety $NL_k$.
\end{lema}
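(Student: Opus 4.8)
The plan is to prove that $\tilde M\in NL_k$ by establishing the single inclusion $\tilde M^{k}\subseteq Lie(\tilde M)$, which is precisely the defining condition $J(\tilde M^{k},\tilde M,\tilde M)=0$ of the variety. The two structural maps carried by the decomposable extension $\tilde M=N_{00}\oplus M_1$ do all the work: the first--coordinate projection $p\colon\tilde M\to N_{00}$, $p(n,m)=n$, and the section $\iota\colon N_{00}\to\tilde M$, $\iota(n)=(n,0)$. Reading off formula (\ref{IV}) one gets $p\big((a,l)(b,r)\big)=[a,b]=[p(a,l),p(b,r)]$ and $[(a,0),(b,0)]=([a,b],0)$, so $p$ is a Malcev epimorphism onto $N_{00}$ equipped with its original product, $\iota$ is an injective homomorphism splitting $p$, and $\tilde M=\iota(N_{00})\oplus L'$ as vector spaces, where $L'=0\oplus M_1=\ker p$.

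First I would record that $L'\subseteq Lie(\tilde M)$. This is the content of the decomposable--extension construction, in which the ideal $M_1$ is placed inside the Lie center of $\tilde M$; it uses exactly the facts proved above, namely $M_1\subseteq Lie(M)$ and $[J(M),M_1]=0$. Next, since $p$ is a homomorphism, the derived powers are preserved, so $p(\tilde M^{k})=N_{00}^{k}$; hence every $W\in\tilde M^{k}$ satisfies $pW\in N_{00}^{k}$, and we may write $W=\iota(pW)+\ell_W$ with $\ell_W\in L'$.

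The key reduction is that the Jacobian on $\tilde M$ sees only the $N_{00}$--coordinates. Writing each argument as $X=\iota(pX)+\ell_X$ with $\ell_X\in L'$ and using that $J$ is trilinear and alternating, every term of the expansion of $J(X,Y,Z)$ that has an argument in $L'\subseteq Lie(\tilde M)$ vanishes; therefore $J(X,Y,Z)=J(\iota pX,\iota pY,\iota pZ)=\iota\big(J_{N_{00}}(pX,pY,pZ)\big)$ for all $X,Y,Z\in\tilde M$, the last equality holding because $\iota$ is a homomorphism (here $J_{N_{00}}$ is the Jacobian computed inside $N_{00}$). Now $N_{00}$ is a subalgebra of $M$ and $M\in NL_k$; as $NL_k$ is a variety it is closed under subalgebras, so $N_{00}\in NL_k$, i.e.\ $J_{N_{00}}(N_{00}^{k},N_{00},N_{00})=0$. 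Taking $W\in\tilde M^{k}$ and arbitrary $Y,Z\in\tilde M$ and recalling $pW\in N_{00}^{k}$, we obtain $J(W,Y,Z)=\iota\big(J_{N_{00}}(pW,pY,pZ)\big)=\iota(0)=0$. Thus $J(\tilde M^{k},\tilde M,\tilde M)=0$ and $\tilde M\in NL_k$.

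The main obstacle is the centrality collapse in the previous paragraph: one must be certain that $M_1$ genuinely lands in the Lie center of the \emph{new} algebra $\tilde M$, and not merely in $Lie(M)$, so that all mixed Jacobian terms die. Granting that (it is exactly what the decomposable--extension construction guarantees, via $[J(M),M_1]=0$), the remainder is routine bookkeeping with the homomorphisms $p$ and $\iota$ together with the fact that membership in the variety $NL_k$ passes to subalgebras.
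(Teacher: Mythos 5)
Your proof is correct and follows essentially the same route as the paper's: since $M_1$ lies in the Lie center of $\tilde M$, every Jacobian collapses onto the $N_{00}$-component, and $N_{00}\in NL_k$ because it is a subalgebra of an algebra in $NL_k$. Your extra step --- that the projection $p$ is a homomorphism onto $N_{00}$, so the $N_{00}$-component of $\tilde M^{k}$ lies in $N_{00}^{k}$ --- merely spells out the paper's terse ``if and only if'', and is precisely the point that fails in the paper's subsequent Remark, where the sum is not direct.
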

\begin{proof}
Since $M_1\subset Lie(\tilde M),$ one has   $J(\tilde M)= J(N_{00})$.  By construction,\\ $[M_1,   J(N_{00})]=0.$
In this case, $\tilde M$ is a Malcev algebra of the variety $NL_k$ if and only if $N_{00} $ is  a Malcev algebra of the variety $NL_k$; which is exactly our case.\\
\end{proof}

{\bf Remark.} In general, if we have a Malcev algebra $ P= P_0 +P_1,$ where $P_0$ is a nilpotent subalgebra, $P_1 \subseteq Lie(P)$ is
an ideal contained in the Lie center  of $P$ and $P/P_1$ is a Malcev algebra of the variety $NL_k$, then $P$ is not necessarily a Malcev algebra of the the variety $NL_k$.
It is possible that $P\in NL_{k+1}\setminus NL_k.$\\

{\bf Example.} Set $P_1 =  {\mathbb R}\{ t, a, b, c\mid [a, t]=a, [b, t]=-b, [a, b]=c, [c, t]=[a, c]=[b, c]=0\}$ and let it be a splitting Lie algebra. Choose  any nilpotent Malcev algebra  $P_0$ which is not a Malcev algebra from  the variety $NL_k$, but  $P_0/Z$ is.  Here $Z={\mathbb R}z$ is some central ideal  of $P_0$.  It is easy to construct an algebra with those properties.  Let us consider  $\tilde P = P_0 \oplus P_1$ and $P=\tilde P /I$, where $I = {\mathbb R}(c - z)$ is a central ideal. It is clear that $P$ is not a Malcev algebra from the variety $NL_k$.
But $ P =  \pi (P_0) + \pi (P_1), $ where  $\pi : \tilde P \longrightarrow P$ is a canonical homomorphism. Notice that $\pi (P_0)\cong P_0, \pi (P_1)\cong P_1\subseteq N(P)$ and  $P/\pi (P_1) = P_0/Z$ is a Malcev algebra from the variety $NL_k$.\\

{\it Proof of the Theorem.}\\
Consider $\hat M =S \oplus T \oplus N$ as splitting algebra of $M = S  \oplus G.$ We will show that $\hat M \in NL_k$ if $M \in NL_k.$
Due to the construction of $\hat M$ ([Gri2]) any ideal $ I\lhd M$ is also the ideal of $\hat M$  and therefore $ M^k = {\hat M}^k, \, k\geq 2$.
Now since $Lie(M) \subseteq Lie(\hat M)$ and
$M^k \subseteq Lie(M)$ one has $M^k \subseteq Lie(M) \subseteq Lie (M^k).$
This means $M^k \in NL_k.$
As it was noticed above $N$ is a semisimple  $S\oplus T$- module, therefore
$ N_{00} = Ann_N(S\oplus T)$ is the nilpotent subalgebra of $\hat M.$  Moreover
$$M_{11} \subseteq {\hat M^{\omega}} \subseteq Lie(\hat M)$$
Since $[T, N_{00}] = 0$ one has $ N_{00} \subseteq N_0. $
In other hand in general case $N_{00} \cap M_1 \neq 0.$
Recall that
$M_1$ is the ideal  generated by $T \oplus M_{11}.$
Finally one gets  $\tilde M=  N_{00} \oplus M_1.$
\qed
\\[4ex]

\section{Malcev algebras and global  Moufang loops.}

A variety ${\bf M}$ of Malcev algebras will be called \textit{(locally) smooth},  if there exists a variety of Moufang loops $\mathcal{L}$ such that the pair
$({\bf M}, \mathcal{L})$ is (locally) globally dual. Analogously, a variety of Moufang loops $\mathcal{L}$ is \textit{(locally) smooth} if there exists a variety of Malcev algebras  ${\bf M}$, such that the pair
$({\bf M}, \mathcal{L})$ is (locally) globally dual.
A dual pair $(M, \mathcal{L})$ will be called \textit{global} if for any local analytic loop $G$ of the variety $\mathcal{L}$,
there exists a global analytic loop $\tilde G$  from the variety $ \mathcal{L}$ which is locally isomorphic to $G$.
It is clear, that not all varieties of Moufang loops are smooth.
For example,  the variety ${\bf B}_n$ of Moufang loops of exponent $n$ is not smooth, since every analytic Moufang loop of  a positive dimension  is not periodic.
Nevertheless we have the following Conjecture:
\begin{conj}\label{mgl2}
Every dual pair $(M, \mathcal{L})$  of Malcev algebras and their corresponding Moufang loops is global. \\
\end{conj}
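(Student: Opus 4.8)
The plan is to start from a local analytic Moufang loop $G$ lying in the variety $\mathcal{L}$ and to manufacture a \emph{global} analytic loop $\tilde G$ that is locally isomorphic to $G$ and still satisfies all the defining identities of $\mathcal{L}$. Since $(M,\mathcal{L})$ is assumed to be a dual pair, the tangent algebra $\mathfrak{g}$ of $G$ is a finite-dimensional real Malcev algebra lying in the variety $M$. The heart of the matter is therefore to pass from $\mathfrak{g}$ back to a global loop in the \emph{correct} subvariety and to control precisely which loop identities survive this passage.

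First I would invoke the global analogue of Lie's third theorem for Moufang loops: by the theorem of Kerdman together with the Malcev--Kuzmin correspondence, every finite-dimensional real Malcev algebra is the tangent algebra of a connected, simply connected global analytic Moufang loop $\tilde G$, and $\tilde G$ is locally isomorphic to the local loop determined by $\mathfrak{g}$, hence to $G$. At this point $\tilde G$ is already a global Moufang loop; what remains is to show $\tilde G\in\mathcal{L}$, i.e. that the identities cutting $\mathcal{L}$ out of the variety of all Moufang loops hold on all of $\tilde G$ and not merely near the identity element.

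The decisive step is an analytic-continuation argument. Each defining identity of $\mathcal{L}$ has the form $w(x_1,\dots,x_n)=1$ for a loop word $w$; since multiplication and inversion on $\tilde G$ are real-analytic, the word map $W\colon \tilde G^{\,n}\to\tilde G$, $W(x_1,\dots,x_n)=w(x_1,\dots,x_n)$, is real-analytic. The local isomorphism with $G$ shows that $W$ agrees with the constant map $e$ on an open neighborhood of $(e,\dots,e)$ in $\tilde G^{\,n}$. Because $\tilde G$ is connected, so is $\tilde G^{\,n}$, and by the identity theorem for real-analytic maps on a connected analytic manifold $W\equiv e$ identically. Thus every identity of $\mathcal{L}$ holds globally, whence $\tilde G\in\mathcal{L}$, which is exactly the assertion that the pair is global.

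The main obstacle, and the reason the statement is only a conjecture, lies in this last step. One must know that $\mathcal{L}$ is genuinely defined by \emph{word identities} whose two sides are globally defined analytic maps, and that passing to inverses causes no trouble --- here diassociativity of Moufang loops, which guarantees globally well-defined, analytic inverses, is essential. A further delicate point is connectedness, which is why one selects the simply connected cover; verifying in full generality that the simply connected global Moufang loop attached to $\mathfrak{g}$ exists and that no monodromy obstruction destroys the global single-valuedness of the word maps is precisely where a complete proof must do real work. Establishing these facts for an \emph{arbitrary} dual pair, rather than for the concrete families $NL_k$ and $NG_k$ studied in this paper, is the crux.
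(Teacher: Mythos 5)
You should first note that the paper does \emph{not} prove this statement: it is stated as Conjecture~1 and left open, and the paper only establishes the special case of the pairs $(NL_k,G_k)$ in Theorem~2. So there is no proof in the paper for your argument to match; what can be compared is your strategy against the route the authors take for their special case. Your strategy --- integrate the tangent algebra to Kerdman's simply connected global loop $\tilde G$ and then propagate the defining identities from a neighborhood of the identity to all of $\tilde G$ --- is exactly the shape of the paper's argument, but the authors only carry out the local-to-global propagation in a very restricted situation: Lemma~\ref{l5} applies it to a \emph{nilpotent, simply connected} loop, where the exponential map is a global diffeomorphism onto $\mathbb{R}^m$, the Campbell--Hausdorff series terminates, and every word map is a \emph{polynomial}, so that local vanishing trivially implies global vanishing. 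All of Section~2 (the splitting $\hat M=S\oplus T\oplus N$, the decomposition $N=N_{00}\oplus[S,N]$, the decomposable extension $\tilde M=N_{00}\oplus M_1$, and the factorization $\hat G=Q_2\cdot G_1$ with $G_1$ in the nucleus) exists precisely to reduce the verification of the identities to that nilpotent factor $Q_2$. You replace this entire reduction by a single appeal to the identity theorem for real-analytic maps on the connected manifold $\tilde G^{\,n}$.

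That replacement is where your proposal stops being a proof. You yourself concede in the final paragraph that the decisive points --- that every identity of an \emph{arbitrary} variety $\mathcal{L}$ occurring in a dual pair is given by globally defined, single-valued real-analytic word maps on Kerdman's loop; that the real-analytic (not merely smooth) structure of $\hat G$ away from the identity supports the identity theorem; and that the local validity of the identities transfers through the local isomorphism onto an actual open neighborhood of $(e,\dots,e)$ --- are ``precisely where a complete proof must do real work.'' None of that work is done, so what you have is a plausible programme rather than a proof. Moreover, the fact that the authors, who clearly know the polynomial version of this continuation argument (Lemma~\ref{l5}), nevertheless state the general case only as a conjecture and build the heavy structural machinery to avoid applying continuation on the non-nilpotent part of $\hat G$, should be read as a warning: either make the blanket identity-theorem step airtight (in which case you will have proved the conjecture and rendered most of Sections~2--3 unnecessary, a claim that demands careful justification), or identify explicitly which of the unverified points fails in general. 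As written, the proposal has a genuine gap at its central step.
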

Notice that if  the pair
$({\bf M}, \mathcal{L})$ is locally dual and the variety ${\bf M}$ contains only Lie algebras, then all finite dimensional Lie algebras from ${\bf M}$ are solvable.
Indeed, if $M\in {\bf M}$ is not solvable finite dimensional then $M=S\oplus G,$ where $S$ is semisimple Lie subalgebra.
Hence $S$ contains some simple $3-$dimensional Lie subalgebra $L.$ But the corresponding Lie group $G(L)$
contains free subgroup. Hence $\mathcal{L}$ is variety of all groups and ${\bf M}$ is the variety of all Lie algebras.

We will prove the Conjecture \ref{mgl2} for the pairs $(NL_{k}, G_{k})$, where $NL_{k}$ is the variety defined in the last section and $G_{k}$ is a variety of Moufang loops defined by all identities of the type $(w, x, y)=1$, where $w\in F^{k},\, k\geq 2$ and $F$ is an infinite free generated Moufang loop such that  $F^1=F$,  and $F^k$
is the normal subloop generated by $\underset {i=1} {\overset {k-1} \Pi}[F^i, F^{k-i}]$.\\

\begin{propo}\label{p1}
The pair $(NL_{k}, G_{k})$ is dual for any $k\geq 2$.
\end{propo}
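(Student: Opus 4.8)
The plan is to establish the two conditions defining a locally dual pair (from the Introduction) for the specific pair $(NL_k, G_k)$: every tangent algebra of a local analytic loop from $G_k$ lies in $NL_k$, and every finite-dimensional $\mathbb{R}$-algebra in $NL_k$ is the tangent algebra of some local analytic loop from $G_k$. Since the ambient pair $(\mathfrak m, \mathfrak M)$ of all Malcev algebras and all Moufang loops is already known to be globally (hence locally) dual by Kerdman's result, the correspondence $M \leftrightarrow G$ between Malcev algebras and local analytic Moufang loops is available; what remains is to show that this correspondence matches the defining identities of the two subvarieties, i.e. that $G \in G_k$ if and only if its tangent Malcev algebra $M \in NL_k$.

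First I would fix the dictionary between the loop-theoretic and algebraic operations. The associator $(a,b,c)$ in the Moufang loop corresponds, at the level of the tangent algebra, to the Jacobian $J(x,y,z)$ (this is essentially the content of Malcev's original computation quoted in the Introduction, where the deviation from associativity in the loop is governed by $J$). Likewise the commutator subloop $F^k$ of the free Moufang loop corresponds to the derived series $M^k = \sum_{i+j=n}[M^i,M^j]$ of the free Malcev algebra, since the loop commutator $[a,b]$ differentiates to the algebra bracket $[x,y]$. Under this dictionary the defining identity $(w,x,y)=1$ for $w \in F^k$ translates precisely into $J(w,x,y)=0$ for $w \in F^k$, which is exactly the defining identity of $NL_k$. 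I would make this translation precise by differentiating the loop identity at the identity element, using the standard functorial passage from a (local) analytic loop to its tangent algebra, and conversely by integrating: given $M \in NL_k$, Kuzmin's theorem produces a local analytic Moufang loop $G$ with tangent algebra $M$, and one checks that the identity $J(w,x,y)=0$ forces the corresponding associator identity $(w,x,y)=1$ to hold on $G$ in a neighborhood of $e$.

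For the first inclusion (tangent algebra of a loop in $G_k$ lies in $NL_k$), I would take a local analytic loop $G \in G_k$, form its tangent Malcev algebra $M$, and show that the vanishing of all associators $(w,x,y)$ with $w$ in the commutator subloop of length $k$ differentiates to the vanishing of $J(w,x,y)$ with $w \in M^k$. For the converse, I would start from $M \in NL_k$, invoke Kuzmin to get a local analytic Moufang loop, and show that $J(w,x,y)=0$ on $M^k$ integrates to $(w,x,y)=1$ on $G^k$. The correspondence between the loop filtration $F^k$ and the algebra filtration $M^k$ is what lets $w \in F^k$ match $w \in M^k$; here I would use that the loop commutator and associator are analytic maps whose leading (bilinear/trilinear) terms at $e$ are the algebra bracket and $J$.

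The main obstacle will be the careful bookkeeping of the filtration correspondence: one must verify that the normal subloop $F^k$ generated by the products $\prod [F^i, F^{k-i}]$ differentiates exactly onto $M^k = \sum_{i+j=k}[M^i,M^j]$, rather than onto something larger or smaller, and that an identity holding for all associators with first argument in $F^k$ is equivalent, under differentiation and integration, to the algebraic identity $J(w,x,y)=0$ for $w \in M^k$. Higher-order terms in the analytic expansions of the commutator and associator could in principle contribute, so one has to confirm that the identities of $G_k$ and $NL_k$ are preserved in both directions and are not merely implied one way. Once this filtration-matching is in place, the two duality conditions follow from the already-established local duality of $(\mathfrak m, \mathfrak M)$ together with Kuzmin's integration theorem, and the proposition is proved.
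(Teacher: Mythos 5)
Your outline follows the same overall strategy as the paper (pass to the local Campbell--Hausdorff loop $M_\epsilon$ and match the loop identity with the algebra identity in both directions), but the actual mathematical content of the proof lives exactly in the step you defer as ``the main obstacle,'' and you do not supply the facts that resolve it. To show that $M\in NL_k$ forces $(M_\epsilon,\cdot)\in G_k$ it is not enough that the leading trilinear term of the associator is $\tfrac16 J(x,y,z)$: the identity $J(w,x,y)=0$ would then only kill the degree-three term of the associator series, not the whole series. The paper uses Kuzmin's precise expansion $(x,y,z)=\tfrac16 J(x,y,z)+\sum_{i\geq 7}a_i(x,y,z)$ in which every remainder term $a_i(x,y,z)$ lies in the ideal $J(M)$; this is what yields the exact inclusion $Lie(M)\cap M_{\epsilon}\subseteq Nuc(M_{\epsilon})$. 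It is combined with the fact that every term $a_s(x,y)$ of the loop-commutator expansion lies in $M^{s}$, so that a weight-$k$ loop commutator lies entirely in $M^{k}\subseteq Lie(M)$ and not merely to leading order. Your ``dictionary'' as stated is a leading-order correspondence and does not by itself control these higher-order tails, which is precisely the issue you flag but leave open.

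For the converse direction you propose to ``differentiate the loop identity,'' but the mechanism the paper actually uses is a scaling argument together with an auxiliary lemma identifying the two filtrations: one first proves $M^{k}\cap M_{\epsilon}=(M_{\epsilon},\cdot)^{k}$ by an induction that exploits the homogeneous decomposition $\{tx_1,\dots,tx_k\}=t^{k}w+\sum_{i>k}t^{i}w_i$ and lets the real parameter $t$ separate the components; then, for $w=[x_1,\dots,x_k]\in M^{k}$, one places $t^{k}w$ inside $(M_{\epsilon},\cdot)^{k}\subseteq Nuc(M_{\epsilon})$ and extracts $J(t^{k}w,x,y)=0$ from the vanishing of the full associator series, again by homogeneity. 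Identifying the filtration matching and the higher-order terms as the crux is correct, but acknowledging the obstacle is not the same as overcoming it: as written, the proposal establishes the equivalence of the two identities only at the level of lowest-degree terms, and the proposition does not follow from that alone.
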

\begin{proof}
Let $M$ be a Malcev $\mathbb{R}$-algebra of dimension $n$ of the  variety $NL_{k}$.  Then $M\cong\mathbb{R}^n$.  There exists a small ball $M_{\epsilon}=\{x\in M\ |\  |x|\leq\epsilon \}$, which  is a local Moufang loop with the product given by the Campbell-Hausdorff formula
\begin{align}
x\cdot y := \mathrm{CH}(x,y)=x+y+\frac{1}{2}[x,y]+\cdots .\label{x1}
\end{align}

Notice that the element  $0$ of $M$ is the unit of this local analytic loop.
From (\ref{x1}) we have  that for every subalgebra $P$ of $M$ the corresponding local subgroup is given by
$P_{\epsilon}= P\cap M_{\epsilon}.$ The subgroup $P_{\epsilon}$ is normal if  and only if $P$ is an ideal of $M.$

From (\ref{x1}) we get
\begin{align}
 \{x, y\}= x^{-1}\cdot y^{-1}\cdot x\cdot y=[x, y]+\sum^{\infty}_{s=3}a_s(x,y),  \label{x2}
\end{align}
where  $a_s(x,y)\in M^{s}$ if $x,y\in M$.  Hence every commutator $w$ in the local Moufang loop $(M, \cdot)$ of length $k\geq 3$ has the form
$w=\underset {i=k} {\overset  {\infty}\sum} w_i$, with $w_i\in M^{i}$. Since $M\in NL_{k}$, then $M^s\subseteq Lie(M)$  for $s\geq k$.
Hence the corresponding commutator subloop $M^k_{\epsilon}$ of local Moufang loop $M_{\epsilon}$ is contained in $Lie(M).$\\

E.Kuzmin proved \cite{Kuzm}, that in a local Moufang loop $(M,\cdot)$ the associator can be expressed as:

\begin{align}
 (x, y, z)=\frac{1}{6}J(x, y, z)+\underset {i=7} {\overset  {\infty}\sum}a_i(x, y, z), \label{x3}
\end{align}
where $a_i(x, y, z)$ is an element of degree $i$ of the ideal $J(M) \subset M.$

By (\ref{x3}) we get that $Lie(M)\cap M_{\epsilon}\subseteq Nuc(M_{\epsilon}),$ where $Nuc(M_{\epsilon})=\{x\in M_{\epsilon}| (x, a, b)=0,\forall a, b\in M_{\epsilon}\}.$
Hence $M^k_{\epsilon}\subset Lie(M)\cap M_{\epsilon}\subset Nuc(M^k_{\epsilon}).$  It means that $(M_{\epsilon},\cdot)\in G_k.$

Now suppose that $(M_{\epsilon},\cdot)\in G_k.$  Following the previous notation, we have:
 \begin{lema}\label{l1}
 $M^k\cap M_{\epsilon}=(M_{\epsilon},\cdot)^k, $ where $(M_{\epsilon},\cdot)^k$ is a commutator subloop of the local loop $(M_{\epsilon},\cdot)$
of degree $k.$
 \end{lema}
 {\bf Proof.}
 From the construction of the local loop $(M_{\epsilon},\cdot)$,   for every ideal $I$ of the Malcev algebra $M$ there is a corresponding  normal subloop
$I_{\epsilon}=I\cap M_{\epsilon}$ of $(M_{\epsilon},\cdot).$
 It is clear that for nilpotent of class $k$  Malcev algebra  $M/M^k$ the corresponding local Moufang loop is $((M/M^k)_{\epsilon},\cdot)$, which is nilpotent of class $k.$ Hence $(M_{\epsilon},\cdot)^k\subseteq M^k$.

Suppose that  $(M_{\epsilon},\cdot)$ is a  nilpotent local  loop of class $k.$ By induction we prove that the Malcev algebra $M$ is nilpotent of the same class $k$.
It is clear  for $k=1$. If the Malcev algebra $M$ is not nilpotent of degree $k$ then for some $x_1,...,x_k\in M$ we have
 $w=[x_1,...,x_k]\not=0$  for some distribution of parentheses.   By (\ref{x2}) we get in $(M_{\epsilon},\cdot)$: $u_t=\{tx_1,tx_2, ...,tx_k\}=t^kw+\sum_{i>k}t^{i}w_i,$ where
 $w_i$ is an element of $M^i$ and $ t\in \mathbb{R}.$ Since $(M,\cdot)$ is nilpotent of degree $k$, $u=0$ in $(M,\cdot).$ Then $w=0$ in $M$ which yields to a contradiction.\\
\qed

With all considerations above Lemma \ref{l1} is proved.

 Now we can finish the proof of
 Proposition \ref{p1}. Let $w=[x_1,...,x_k]\in M^k$, we have to prove that $w\in Lie(M).$ For some $t\in\mathbb{R} $ we have  by Lemma \ref{l1} that $t^kw=[tx_1,....,tx_k]\in M^k\cap M_{\epsilon}\subset (M_{\epsilon},\cdot)^k\subset Nuc(M_{\epsilon}).$ Here we used that $(M_{\epsilon},\cdot)\in G_k.$
 Hence $(t^kw, x, y)=0$ for all $x,y\in M_{\epsilon}.$ By \ref{x3} we get that
 $J(t^kw, x, y)=0.$ It means that $w\in Lie(M).$
 Proposition \ref{p1} is proved.
 \end{proof}

Now we are ready to prove the main result of this paper.\\

\begin{theo}
The dual pair $ (NL_k, G_k)$ is global.
\end{theo}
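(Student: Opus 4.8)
The plan is to produce the required global loop from Kerdman's theorem and then to verify membership in $G_k$ by locating the commutator subloop inside the nucleus, so that no new hard analysis is needed beyond the local dictionary already set up. Let $G$ be a local analytic loop of the variety $G_k$ and let $M$ be its tangent Malcev algebra. By Proposition \ref{p1} the pair $(NL_k,G_k)$ is dual, so $M\in NL_k$; in particular $M^{k}\subseteq Lie(M)$. Since by Kerdman \cite{Ker} the pair formed by all Malcev algebras and all Moufang loops is globally dual, there is a simply connected global analytic Moufang loop $\tilde G$ whose tangent algebra is $M$, and $\tilde G$ is then locally isomorphic to $G$. It remains to prove $\tilde G\in G_k$, i.e. that the commutator subloop $\tilde G^{k}$ is contained in the nucleus $Nuc(\tilde G)$.

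First I would introduce the connected normal subloop $\tilde L\lhd\tilde G$ corresponding, under Kerdman's global correspondence, to the ideal $Lie(M)\lhd M$, so that the tangent algebra of $\tilde L$ is $Lie(M)$. I claim $\tilde G^{k}\subseteq\tilde L$. Indeed, by (\ref{x2}) the tangent algebra of the commutator subloop $\tilde G^{k}$ is the commutator ideal $M^{k}$; passing to the quotient $\tilde G/\tilde L$, whose tangent algebra is $M/Lie(M)$, the image of $\tilde G^{k}$ acquires tangent algebra $(M^{k}+Lie(M))/Lie(M)=0$ because $M^{k}\subseteq Lie(M)$. A connected subloop with trivial tangent algebra is trivial, so the image of $\tilde G^{k}$ in $\tilde G/\tilde L$ is trivial and hence $\tilde G^{k}\subseteq\tilde L$.

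Next I would show $\tilde L\subseteq Nuc(\tilde G)$. In a neighbourhood of the unit, Kuzmin's formula (\ref{x3}) gives $(x,y,z)=\tfrac16 J(x,y,z)+\sum_{i\ge 7}a_i(x,y,z)$ with $a_i\in J(M)$, and for $x\in Lie(M)$ every term vanishes; this is exactly the inclusion $Lie(M)\cap M_{\epsilon}\subseteq Nuc(M_{\epsilon})$ already used in the proof of Proposition \ref{p1} and resting on Lemma \ref{l2}. Thus the local part $\tilde L_{\epsilon}$ lies in $Nuc(\tilde G)$. Because $\tilde G$ is connected, $\tilde L$ is generated by $\tilde L_{\epsilon}$, and because the nucleus of a Moufang loop is a subgroup, the subloop generated by $\tilde L_{\epsilon}$ remains inside $Nuc(\tilde G)$; hence $\tilde L\subseteq Nuc(\tilde G)$. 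Combining the two inclusions yields $\tilde G^{k}\subseteq\tilde L\subseteq Nuc(\tilde G)$, so every commutator of length $k$ associates with all elements, $\tilde G\in G_k$, and the pair $(NL_k,G_k)$ is global.

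I expect the delicate point to be the globalisation in the third paragraph: formula (\ref{x3}) controls the associator only near the unit, so one must pass from $\tilde L_{\epsilon}\subseteq Nuc(\tilde G)$ to $\tilde L\subseteq Nuc(\tilde G)$ without analytically continuing the associator itself. The argument sketched avoids this by invoking two structural facts only — a connected loop is generated by any neighbourhood of its unit, and the nucleus is a subgroup. A secondary point to be justified carefully is the tangent-algebra dictionary in the second paragraph (commutator subloop $\leftrightarrow$ commutator ideal, normal subloop $\leftrightarrow$ ideal, trivial tangent algebra $\Rightarrow$ trivial connected subloop), which is supplied by Kerdman's global duality \cite{Ker} together with the local expansion (\ref{x2}); alternatively, the explicit decomposable extension $\tilde M=N_{00}\oplus M_1$ of the structure Theorem could be exponentiated directly to give the same global loop as a semidirect product of the simply connected Lie group of $M_1$ with the polynomial loop of the nilpotent algebra $N_{00}$, furnishing a constructive substitute for the appeal to Kerdman.
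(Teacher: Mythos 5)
There is a genuine gap at the single step that carries all the difficulty of the theorem: the assertion ``Thus the local part $\tilde L_{\epsilon}$ lies in $Nuc(\tilde G)$.'' Kuzmin's formula (\ref{x3}) is a series expansion valid only in a neighbourhood of the unit, so what it yields for $x\in Lie(M)\cap M_{\epsilon}$ is that $(x,a,b)$ is trivial for $a,b\in M_{\epsilon}$, i.e.\ membership in the \emph{local} nucleus $Nuc(M_{\epsilon})$, which the paper defines by quantifying $a,b$ over $M_{\epsilon}$ only. Membership in the global nucleus $Nuc(\tilde G)$ requires $(x,a,b)=1$ for \emph{all} $a,b\in\tilde G$, and (\ref{x3}) says nothing about associators whose second and third arguments are far from the unit. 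Your two structural facts (a connected loop is generated by any neighbourhood of the unit; the nucleus is a subgroup) only enlarge the \emph{first} argument of the associator --- they would let you pass from ``$\tilde L_{\epsilon}$ associates with everything'' to ``$\tilde L$ associates with everything'' --- but they never enlarge the second and third arguments from $\tilde G_{\epsilon}$ to $\tilde G$. So the premise of that passage is exactly what is missing, and it is not a technicality: the local-to-global transfer of an identity in an analytic Moufang loop is the entire content of the theorem.

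The paper closes this gap by structure theory rather than by direct continuation: it passes to the splitting $\hat M=N_{00}+M_1$ with $M_1\subseteq Lie(\hat M)$, covers the Kerdman loop $\hat G$ by the decomposable extension $\tilde G=Q_2\times G_1$ with the explicit global product (\ref{H1}), from which $G_1\subseteq Nuc(\tilde G)$ is read off algebraically (not from a local expansion), and then reduces the identity to the nilpotent factor $Q_2$, where the Campbell--Hausdorff product is polynomial, so a local identity is a polynomial vanishing on an open set and hence everywhere (Lemma \ref{l5}). To salvage your shorter route you would have to replace ``generation plus subgroup'' by a genuine analytic-continuation argument for the map $(a,b)\mapsto(x,a,b)$ on the connected manifold $\tilde G\times\tilde G$ --- precisely the move you said you wanted to avoid. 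A secondary point needing justification is that $\tilde G^{k}$ is a connected subloop with tangent algebra $M^{k}$ and that a connected subloop with trivial tangent algebra is trivial; these are Lie-group folklore whose Moufang analogues are not free.
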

\begin{proof}
Let $G_0\in G_k$ be a local analytic loop and let $L(G_0)=M\in NL_k$ be its corresponding Malcev algebra.  Let
$\varphi: M \hookrightarrow \hat{M}$ be an embedding   of $M$ in a splitting Malcev algebra $\hat{M}=S\oplus T\oplus N$, (see notation of Theorem 1).
By definition $\hat{M}$ is minimal with this property. Hence $[\hat{M}, \hat{M}]=[M,M].$  Since $M\in NL_k$ then $\hat{M}\in NL_k$ by Theorem 1.
By \cite{Ker} there exists the corresponding to $\hat{M}$ global analytic simply connected  Moufang loop $\hat{G}.$

By construction of $\hat{G}$ in \cite{Ker} we get that
$\hat{G}=P\cdot Q,$ where $P$ is simply connected semisimple Lie group with corresponding Lie algebra $S$ and $Q=Q_0\cdot Q_1$ is simply connected
solvable Moufang loop with corresponding
Malcev algebra $T\oplus N,$ and $Q_0\simeq \mathbb{R}^t\simeq T$ is abelian vectorial group Lie, corresponding to Lie  subalgebra $T,$ $Q_1$ is simply connected nilpotent normal subloop corresponding to the nilpotent ideal $N.$
By Theorem 1 we have that $\hat{M}=S\oplus T\oplus N=N_{00}+M_1,$ where $N_{00}\subseteq N$ is a nilpotent subalgebra and $M_1$ is an ideal of $\hat{M}$ that is contained in $Lie(\hat{M}).$ Since exponential map from $N$ to $Q_1$
is a bijection then $Exp(N_{00})=Q_2\subset Q_1$ is a nilpotent simply connected subloop of $\hat{G}.$ Since
$\hat{M}=N_{00}+M_1$ then $\hat{G}=Q_2.G_1,$
where $G_1$ is simply connected normal group Lie corresponding to the ideal $M_1\subset Lie(\hat{M}).$

  Since $\hat{M}$ is a splitting  by Theorem 1 there exists a Malcev algebra $\tilde{M}=N_{00}\oplus M_1\in NL_k$   and an epimorphism
$\pi: \tilde{M}\longmapsto \hat{M}.$\\
  Let $\tilde{G}$ be simply connected analytic
 Moufang loop corresponding to Malcev algebra $\tilde{M}$. Then $\tilde{G}=Q_2\times G_1,$
 where $Q_2$ and $G_1$ are subloops of $\hat{G}.$
 Notice that multiplication in $\tilde{G}$
 may be given the following analogue  of  (3).
 \begin{align}
(r_1, g_1)\cdot(r_2, g_2)=(r_1r_2,  g_1^{r_2}g_2) \label{H1}
\end{align}
  Where $g_1^{r_2}=r_2^{-1}g_1r_2$ is natural action of
  Moufang loop $Q_2$ on the Lie group $G_1$ by automorphisms, since $A(Q_2)=J(N_{00})$ acts trivially on $G_1.$
Here $A(Q_2)$ is an associator of $Q_2$ and we used that $[J(\hat{M}),M_1]\subseteq
  [J(\hat{M}),Lie(\hat M)]=0$.

 It is clear that $G_1$ is contained in the nucleus  of $\tilde{G}$ and
$\tilde{G}\in G_k$ if and only if $Q_2\in G_k$.

 \begin{lema}\label{l5}
 Let $N$ be a nilpotent finite dimensional Malcev algebra and $R$ be the corresponding simply connected analytic Moufang loop.

 If the corresponding local analytic Moufang loop $R_{\epsilon}$ satisfies some identity $f(x_1,\dots, x_n)=1,$ then the global analytic loop
 $R$ satisfies the same identity.

 In particular, $N\in NL_k$ if and only if $R\in G_k.$
 \end{lema}
 \begin{proof} It is possible to assume  that $R=N\simeq {\bf R}^m$ with multiplication (\ref{x1}).
 Let $f$ be an identity of the local analytic loop $R_{\epsilon}.$ Then $f(x_1,\dots,x_n)=\sum_jf_j,$ where $f_j=f_j(x_1,\dots,x_n)$
 is a Lie word in $x_1,\dots,x_n.$ Let $v_1,\dots,v_m$ be a basis of $N.$
 Then
 $f=\sum_{j=1}^m g_jv_j,$  where $g_j=g_j(x_1,\dots,x_n)$ is a polynomial function in $x_1,\dots,x_n,$ $j=1,\dots,m.$
 Since $f$ is a local identity then $g_j=0$ if $|x_s|<\varepsilon,$ $s=1,\dots,n$ and $\varepsilon$ is small enough.
But any polynomial function, which is equal to zero in some neighborhood of $\bar{0}$ is equal to zero for all values of the variables.
Hence $f=1$ is an identity of the loop $R$.
 \end{proof}

 Returning to the proof of the Theorem 2 we have that $\tilde{G}\in G_k$ due to Lemma \ref{l5}. Hence $\hat{G}\in G_k,$
 since $\hat{G}$ is a homomorphic image of $\tilde{G}.$\\
With this the proof of the Theorem 2 is done.
 \end{proof}

\section*{Acknowledgements}
The first author was supported by grants   CNPq  307824/2016-0 and
FAPESP  2018/ 23690-6.
The second author was supported by grant FAPESP  2018/11292-6.
The third author thanks for the support to  FAPESP  for grant 2019/24418-0 and the University of Sao Paulo.
The first and fourth authors was supported by UAEU UPAR grant G00002599.

\end{document}